 \renewcommand{\div}{\mathop{\mathrm{div}}\nolimits}
\newtheorem*{thm*}{Theorem A}
\newtheorem{thm}{Theorem}[section]
\newtheorem{dfn}{Definition}[section]
\newtheorem{lemma}{Lemma}[section]
\numberwithin{equation}{section}
\begin{document}

\def\IR{{\mathbb{R}}}

\title{Stable solutions of symmetric systems on Riemannian manifolds}

\author{Mostafa Fazly}

\address{Department of Mathematics, The University of Texas at San Antonio, San Antonio, TX 78249, USA}
\email{mostafa.fazly@utsa.edu}

\address{Department of Mathematics \& Computer Science, University of Lethbridge, Lethbridge, AB T1K 3M4 Canada.}
\email{mostafa.fazly@uleth.ca}

\thanks{The author gratefully acknowledges Natural Sciences and Engineering Research Council of Canada (NSERC) Discovery Grant and University of Texas at San Antonio Start-up Grant.}

\maketitle

\begin{abstract}  
We examine stable solutions of the following symmetric system on a complete, connected, smooth Riemannian
manifold $\mathbb{M}$ without boundary,  
\begin{equation*}
   -\Delta_g u_i   =   H_i(u_1,\cdots,u_m)  \ \ \text{on} \ \ \mathbb{M},
  \end{equation*}  
when $\Delta_g$ stands for the Laplace-Beltrami operator, $u_i:\mathbb{M}\to \mathbb R$ and $H_i\in C^1(\mathbb R^m) $ for $1\le i\le m$.    This system is called symmetric if the matrix of partial derivatives of all components of $H$, that is $\mathbb H(u)=(\partial_j H_i(u))_{i,j=1}^m$, is symmetric. We prove  a stability inequality and a Poincar\'{e} type inequality for stable solutions using the Bochner-Weitzenb\"{o}ck formula. Then, we apply these inequalities to establish  Liouville theorems and  flatness
of level sets for stable solutions of the above symmetric system, under certain assumptions on the manifold and on solutions.   
\end{abstract}

\noindent
{\it \footnotesize 2010 Mathematics Subject Classification}. {\scriptsize  58J05, 53B21, 35R01, 35J45, 53C21.}\\
{\it \footnotesize Keywords:   Laplace-Beltrami operator, Riemannian
manifolds, nonlinear elliptic systems,  qualitative properties of solutions, Liouville theorems}. {\scriptsize }

\section{Introduction} 
Suppose that  $\mathbb M$ is  a complete, connected, smooth, $n$-dimensional Riemannian
manifold without boundary, endowed with a smooth Riemannian metric $g = \{g_{ij}\}$.  Consider $u=(u_i)_{i=1}^m$ for $u_i\in C^3(\mathbb M)$  that satisfies 
 \begin{equation} \label{main}
   -\Delta_g u_i   =   H_i(u)  \quad  \text{on} \ \  \mathbb{M} , 
  \end{equation}  
 where $\Delta_g$ stands for the Laplace-Beltrami operator and $H_i\in C^1(\mathbb R^m)$ for  $i=1,\dots, m$. Let $f:\mathbb M\to \mathbb R$ be a  function in $C^3(\mathbb M)$. Then,  the Riemannian gradient and the Laplace-Beltrami operator are given by 
\begin{equation}\label{}
(\nabla_g f)_i=g^{ij} \partial_j f,
\end{equation}
and 
\begin{equation}\label{}
\Delta_g f= \div_g(\nabla_g f)=\frac{1}{\sqrt{|g|}} \partial_i\left( \sqrt{|g|} g^{ij} \partial_j f\right). 
\end{equation}
In this article, we frequently refer to the  Bochner-Weitzenb\"{o}ck formula that is 
\begin{equation}\label{bochner}
\frac{1}{2} \Delta_g |\nabla _g f|^2 = |\mathcal H _f|^2+ \nabla _g \Delta_g f\cdot \nabla_g f+ \text{Ric}_g (\nabla_g f,\nabla_g f). 
\end{equation}
 Here $\mathcal H_f$ stands for the Hessian
of $f$ as the symmetric 2-tensor given in a local patch that is 
\begin{equation}\label{Hu}
(\mathcal H_f)_{ij}:=\partial_{ij} f- \Gamma_{ij}^k \partial_k f,
\end{equation}
where $\Gamma_{ij}^k$ is the Christoffel symbol that is 
\begin{equation}\label{gijk}
 \Gamma_{ij}^k= \frac{1}{2} g^{hk}(\partial_i g_{hj}+\partial_j g_{ih}-\partial_{h} g_{ij}). 
\end{equation}
Note that from the definition of the Hessian one can see that 
\begin{equation}\label{ineqfH}
|\nabla_g |\nabla_g f| |^2 \le |\mathcal H_f|^2. 
\end{equation}
The equality holds at $p\in \mathbb M \cap \{\nabla_g f \neq 0\}$ if and only if there exists $\kappa_k:\mathbb M\to \mathbb R$ for each $k=1,\cdots,n$ such that 
\begin{equation}\label{kappa}
\nabla_g(\nabla_g f)_k(p)=\kappa_k (p) \nabla_g f(p),
\end{equation}
see \cite{fsv,ll,jo} for details.   Here,  we provide the definition of parabolic manifolds. We refer interested readers to \cite{gt,ls,r,cy} for more information.  
\begin{dfn} \label{paraman}
A manifold  $\mathbb M$ is called parabolic when for every point $p \in \mathbb M$ there exists a precompact neighborhood  $M_p$ of $p$ in $\mathbb M$ such that for an arbitrary positive $\epsilon$ there exists a function   $f_\epsilon \in C_c^\infty(\mathbb M)$ that $f_\epsilon(q)=1$ for all $q\in M_p$ and $\int_{\mathbb M}|\nabla _g f_\epsilon|^2 dV_g \le \epsilon $.  
\end{dfn} 
For the sake of simplicity,  we use the notation  $\partial_j H_i(u)=\frac{\partial H_i(u)}{\partial {u_j}} $ and  we  assume that 
\begin{equation}\label{partialiHj}
\partial_i H_j (u)\partial_j H_i (u) > 0, 
\end{equation} 
for $1\le i,j\le m$.    In addition, $B_R$ stands for a geodesic ball of radius $R>0$ centred at a 
given point of  $\mathbb M$ and  $|B_R|$ is its volume with respect to the volume element
$dV_g$.   We now provide the notion of stable solutions.  
\begin{dfn} \label{stable}
A solution $u=(u_i)_{i}$ of (\ref{main}) is said to be stable when there exist a sequence of functions $\zeta=(\zeta_i)_i $ where each $\zeta_i\in C^3(\mathbb M)$ does not change sign and a nonnegative  constant $\lambda$ such that the following linearized system holds
\begin{equation} \label{sta}
 -\Delta \zeta_i =   \sum_{j=1}^{n} \partial_j H_i(u) \zeta_j + \lambda \zeta_i \quad  \text{on}  \ \  \mathbb M,
 \end{equation} 
 for all $ i=1,\cdots,m$.  In addition, we assume that $\partial_j H_i(u) \zeta_i \zeta_j >0$ for all $1\le i,j\le m$. 
\end{dfn} 
  For the case of $\mathbb M=\mathbb R^n$ and scalar equations,  that is when $m=1$,  the above notion of stability was derived in connections with the De Giorgi's conjecture and it is used in the literature extensively,  see \cite{aac,ac,dkw,df,gg1,sav,fsv1,fsv2,fsv,pw}. The latter conjecture  \cite{deg}, given in 1978, states that bounded monotone solutions of the Allen-Cahn equation must be hyperplane, see \cite{aac,ac,fsv,gg1,pw} and references therein.  For the case of system of equations, that is when $m\ge 2$
  \begin{equation}
   -\Delta u_i   =   H_i(u_1,\cdots,u_m)  \ \ \text{on} \ \ \mathbb R^n,
  \end{equation} 
   the notion of stability is given in \cite{mf,mf1,fg} and references therein.  We also refer interested readers to \cite{blwz,btww,fso,wang} for the following two-component elliptic system,  originated in the Bose-Einstein condensation and nonlinear optics, 
   \begin{eqnarray}\label{Bose}
 \left\{ \begin{array}{lcl}
\hfill -\Delta u =H_1(u,v) \quad  \text{on}  \ \  \mathbb R^n, \\
\hfill  -\Delta v = H_2(u,v) \quad  \text{on}  \ \  \mathbb R^n,
\end{array}\right.
  \end{eqnarray}
when $H_1(u,v)=-uv^2$ and $H_2(u,v)=-vu^2$.     For this system,  nonnegative monotone solutions, which are $u_{x_n}(x) v_{x_n}(x)<0$ in $\mathbb R^n$,  are of interests. Straightforward computations show that monotone solutions satisfy  (\ref{sta}) for $\zeta_1=u_{x_n}$, $\zeta_2=v_{x_n}$ and $\lambda=0$. Note also that  $\partial_v H_1=\partial_u H_2<0$ and $\partial_v H_1\partial_u H_2>0$. For a similar notion of stability,   we refer interested readers to \cite{fg,abg} for the Allen-Cahn system and to \cite{Mont,d,dp,cf,mf1,mf} for systems with general nonlinearities on bounded and unbounded domains.

We now provide the notion of symmetric systems introduced in \cite{mf} when $\mathbb M=\mathbb R^n$.  Symmetric systems play a fundamental role throughout this paper when we study system (\ref{main}) with a general nonlinearity $H(u)=(H_i(u))_{i=1}^m$.    
\begin{dfn}\label{symmetric} We call system (\ref{main}) symmetric if the matrix of gradient of all components of $H$ that is 
 \begin{equation} \label{H}
\mathbb{H}:=(\partial_i H_j(u))_{i,j=1}^{m} ,
 \end{equation}
  is symmetric. 
   \end{dfn}

Here is how this article is structured.  In Section \ref{secin}, we provide a stability inequality for stable solutions of (\ref{main}). Applying this stability inequality we prove a weighted Poincar\'{e} type inequality for stable solutions of system (\ref{main}).    In Section \ref{secli}, we provide applications of  inequalities provided in Section \ref{secin}.     To be mathematically more precise, we establish various Liouville theorems regarding stable solutions as well as a rigidity result concerning level sets of stable solutions. 
   
\section{Inequalities for stable solutions} \label{secin}

We start this section by an inequality for stable solutions of (\ref{main}). For the case of a scalar equations,  this inequality  is used in the literature extensively to study  symmetry  properties, regularity theory, Liouville theorems, etc.  regarding stable solutions. We refer interested readers to \cite{aac,fcs,ac,gg1,fsv,fsv1,fsv2,ng,c} for more information.    For the case of system of equations and when $\mathbb M=\mathbb R^n$, this inequality is given in \cite{cf,fg,mf}.

\begin{lemma}\label{stablein}  
  Let $u$ denote a stable solution of (\ref{main}).  Then 
\begin{equation} \label{stability}
\sum_{i,j=1}^{m} \int_{\mathbb M}  \sqrt{\partial_j H_i(u) \partial_i H_j(u)} \phi_i \phi_j  dV_g\le \sum_{i=1}^{m} \int_{\mathbb M}  |\nabla_g \phi_i|^2 dV_g  , 
\end{equation} 
for any $\phi=(\phi_i)_{i=1}^m$ where $ \phi_i \in C_c^1(\mathbb M)$ for $1\le i\le m$. 
\end{lemma}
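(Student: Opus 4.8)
The plan is to test each component of the linearized system (\ref{sta}) against $\phi_i^2/\zeta_i$, sum over $i$, and then convert the resulting quadratic form into the desired geometric-mean expression. First I would record the pointwise Picone-type identity: for each $i$ and at points where $\zeta_i\neq 0$,
\begin{equation*}
|\nabla_g\phi_i|^2-\nabla_g\Big(\frac{\phi_i^2}{\zeta_i}\Big)\cdot\nabla_g\zeta_i=\Big|\nabla_g\phi_i-\frac{\phi_i}{\zeta_i}\nabla_g\zeta_i\Big|^2\ge 0,
\end{equation*}
which gives $\int_{\mathbb M}|\nabla_g\phi_i|^2\,dV_g\ge\int_{\mathbb M}\nabla_g(\phi_i^2/\zeta_i)\cdot\nabla_g\zeta_i\,dV_g$. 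Since $\phi_i\in C_c^1(\mathbb M)$, integrating by parts on $\mathbb M$ produces no boundary contribution, so the right-hand side equals $\int_{\mathbb M}(\phi_i^2/\zeta_i)(-\Delta_g\zeta_i)\,dV_g$. Substituting the linearized equation (\ref{sta}) and summing over $i$ then yields
\begin{equation*}
\sum_{i=1}^m\int_{\mathbb M}|\nabla_g\phi_i|^2\,dV_g\ \ge\ \sum_{i,j=1}^m\int_{\mathbb M}\partial_jH_i(u)\,\frac{\zeta_j}{\zeta_i}\,\phi_i^2\,dV_g\ +\ \lambda\sum_{i=1}^m\int_{\mathbb M}\phi_i^2\,dV_g,
\end{equation*}
and since $\lambda\ge 0$ the last term may be discarded.

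Next I would estimate the double sum from below. The diagonal terms reduce to $\partial_iH_i(u)\phi_i^2$, and the sign hypothesis $\partial_jH_i(u)\zeta_i\zeta_j>0$ of Definition \ref{stable} applied with $i=j$ gives $\partial_iH_i(u)>0$, so these equal $\sqrt{\partial_iH_i(u)\partial_iH_i(u)}\,\phi_i^2$. For an off-diagonal pair $(i,j)$ I would group the $(i,j)$ and $(j,i)$ contributions as $a_{ij}\phi_i^2+a_{ji}\phi_j^2$, where $a_{ij}:=\partial_jH_i(u)\zeta_j/\zeta_i$ and $a_{ji}:=\partial_iH_j(u)\zeta_i/\zeta_j$. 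The same sign hypothesis shows $a_{ij}>0$ and $a_{ji}>0$, while the ratios of $\zeta$'s cancel so that $a_{ij}a_{ji}=\partial_jH_i(u)\partial_iH_j(u)$, which is positive by (\ref{partialiHj}). The arithmetic-geometric mean inequality then gives $a_{ij}\phi_i^2+a_{ji}\phi_j^2\ge 2\sqrt{\partial_jH_i(u)\partial_iH_j(u)}\,|\phi_i\phi_j|\ge 2\sqrt{\partial_jH_i(u)\partial_iH_j(u)}\,\phi_i\phi_j$. Summing the diagonal contributions and the paired off-diagonal estimates reconstructs exactly $\sum_{i,j}\sqrt{\partial_jH_i(u)\partial_iH_j(u)}\,\phi_i\phi_j$, which is the left-hand side of (\ref{stability}), completing the argument.

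The main obstacle I anticipate is purely technical and concerns the zero set of the weights $\zeta_i$, since the test functions $\phi_i^2/\zeta_i$ and the ratios $\zeta_j/\zeta_i$ are defined only where $\zeta_i\neq 0$. I would resolve this by first observing that each $\zeta_i$ does not change sign and solves the linear elliptic equation (\ref{sta}), so by the strong maximum principle it is either identically zero, which is an excluded degenerate case, or strictly of one sign; alternatively one replaces $\zeta_i$ by $\zeta_i\pm\varepsilon$ in the Picone identity, performs the integration by parts, and passes to the limit $\varepsilon\to 0$ using the compact support of $\phi_i$ and dominated convergence. I note that the geometric-mean form of (\ref{stability}) is obtained without invoking the symmetry of $\mathbb H$ from Definition \ref{symmetric}; that symmetry will instead be needed for the subsequent Poincar\'e-type inequality and the Liouville-type applications.
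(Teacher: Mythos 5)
Your proposal is correct and follows essentially the same route as the paper: testing the linearized system (\ref{sta}) against $\phi_i^2/\zeta_i$, integrating by parts with the pointwise inequality $2\frac{\phi_i}{\zeta_i}\nabla_g\zeta_i\cdot\nabla_g\phi_i-\frac{\phi_i^2}{\zeta_i^2}|\nabla_g\zeta_i|^2\le|\nabla_g\phi_i|^2$ (your Picone identity is just this rearranged), discarding the $\lambda$ term, and pairing the $(i,j)$ and $(j,i)$ off-diagonal terms via the arithmetic--geometric mean inequality. Your extra care about the zero set of $\zeta_i$ is in fact unnecessary, since the hypothesis $\partial_jH_i(u)\zeta_i\zeta_j>0$ of Definition \ref{stable} applied with $i=j$ already forces $\zeta_i^2>0$ everywhere.
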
  

\begin{proof}   The fact that $u$ is a stable solution implies that  there exist a  sequence $\zeta=(\zeta_i)_{i=1}^m$ and a nonnegative constant $\lambda$ such that for all $i=1,\cdots, m$  
\begin{equation}\label{linearzeta}
  -\Delta_g \zeta_i =  \sum_{j=1}^{n} \partial_j H_i(u) \zeta_j  + \lambda \zeta_i \ \ \text{on } \ \mathbb M  . 
\end{equation}
Consider a sequence of test functions  $\phi=(\phi_i)_i^m$ where $ \phi_i \in C_c^1(\mathbb M)$ for $1\le i\le m$.  Multiplying  both sides of (\ref{linearzeta})  with $\frac{\phi_i^2}{\zeta_i}$ and integrating,  we get 
\begin{equation}\label{ineq}
 \lambda \int_{\mathbb M}  \phi^2_i + \sum_{j=1}^{n}  \int_{\mathbb M}  \partial_j H_i(u) \zeta_j \frac{\phi_i^2}{\zeta_i}dV_g   = \int_{\mathbb M}    - \frac{\Delta_g \zeta_i}{\zeta_i} {\phi_i^2}dV_g . 
  \end{equation}
Applying the fact that 
\begin{equation} 2 \frac{\phi_i}{\zeta_i}\nabla_g \zeta_i\cdot \nabla_g \phi_i - |\nabla_g \zeta_i|^2 \frac{\phi_i^2}{\zeta_i^2} \le |\nabla_g \phi_i |^2, 
  \end{equation}
we  obtain    
\begin{equation}\label{deltagzeta}
\int_{\mathbb M}    - \frac{\Delta_g \zeta_i}{\zeta_i} {\phi_i^2} dV_g  \le \int_{\mathbb M}  |\nabla_g \phi_i|^2  dV_g, 
  \end{equation}
   for each $i=1,\cdots,m$. Note also that here we have applied the divergence theorem for the Laplace-Beltrami operator. Combining (\ref{deltagzeta}) and (\ref{ineq}), we end up with 
   \begin{equation}\label{jhizeta}
 \sum_{i,j=1}^{n}  \int_{\mathbb M}  \partial_j H_i(u) \zeta_j \frac{\phi_i^2}{\zeta_i}dV_g   \le  \sum_{i=1}^{n} \int_{\mathbb M}  |\nabla_g \phi_i|^2  dV_g.  
  \end{equation}
   For the left-hand side of (\ref{jhizeta}), straightforward calculations show that  
\begin{eqnarray}
 \nonumber \sum_{i,j=1}^{m}  \int_{\mathbb M}  \partial_j H_i(u) \zeta_j \frac{\phi_i^2}{\zeta_i}dV_g & =& \nonumber  \sum_{i<j}^{m}  \int_{\mathbb M}  \partial_j H_i(u) \zeta_j \frac{\phi_i^2}{\zeta_i} dV_g + \sum_{i>j}^{n}  \int_{\mathbb M}  \partial_j H_i(u) \zeta_j \frac{\phi_i^2}{\zeta_i} dV_g  
  \\&& \nonumber +  \sum_{i=1}^{m}\int_{\mathbb M} \partial_i H_i(u) {\phi_i^2}dV_g 
  \\&=& \nonumber
  \sum_{i<j}^{m}   \int_{\mathbb M} \partial_j H_i(u) \zeta_j \frac{\phi_i^2}{\zeta_i} dV_g  + \sum_{i<j}^{m}  \int_{\mathbb M}  \partial_i H_j(u) \zeta_i \frac{\phi_j^2}{\zeta_j} dV_g  
  \\&&\nonumber  + \sum_{i=1}^{m} \int_{\mathbb M} \partial_i H_i(u) {\phi_i^2}dV_g 
  \\&=& \nonumber \sum_{i<j}^{m}   \int_{\mathbb M}  \left( \partial_j H_i(u) \zeta_j \frac{\phi_i^2}{\zeta_i}   + \partial_i H_j(u) \zeta_i \frac{\phi_j^2}{\zeta_j}  \right)dV_g  
  \\&& \nonumber  +  \sum_{i=1}^{m} \int_{\mathbb M}  \partial_i H_i(u) {\phi_i^2} dV_g 
  \\&\ge &\nonumber  2 \sum_{i<j}^{m}  \int_{\mathbb M}    \sqrt{\partial_j H_i(u) \partial_i H_j(u)} \phi_i \phi_j dV_g +  \sum_{i=1}^{m} \int_{\mathbb M}  \partial_i H_i(u) {\phi_i^2} dV_g 
  \\&=&\sum_{i,j=1}^{m}   \int_{\mathbb M} \sqrt{\partial_j H_i(u) \partial_i H_j(u)} \phi_i\phi_j dV_g .
  \end{eqnarray}
This finishes the proof. 

\end{proof}

Applying the stability inequality (\ref{stability}), given in Lemma \ref{stablein},  we prove a weighted Poincar\'{e} inequality as the following theorem. Note that for the case of scalar equations a similar  inequality   is established in \cite{sz}  and used in \cite{sz1,fsv,c},  in the Euclidean sense, and  in \cite{fsv1,fsv2} on Riemannian manifolds.   For the case of system of equations, that is when $m\ge 2$, this inequality was derived in \cite{fg} in the Euclidean sense and  later used in \cite{dp,d,mf1} and references therein.   

\begin{thm}\label{poin}
 Assume that   $u \in C^3(\mathbb M)$ is a stable solution of (\ref{main}) where $m,n\ge 1$.  Then, the following inequality holds for any $\eta=(\eta_k)_{k=1}^m \in C_c^1(\mathbb M)$ 
 \begin{eqnarray}\label{poincare}
&&  \sum_{i=1}^m   \int_{\mathbb M}  \left(  Ric_g(\nabla_g u_i,\nabla_g u_i) +|\mathcal H_{u_i}|^2-|\nabla _g |\nabla_g u_i| |^2  \right)\eta_i^2 dV_g 
\\&&\nonumber +\sum_{i\neq j}^m  \int_{\mathbb M} \left(\sqrt{\partial_j H_i(u) \partial_i H_j(u)  }   |\nabla_g u_i| |\nabla_g u_j| \eta_i \eta_j-  \partial_j H_i(u) \nabla_g u_i \cdot\nabla_g  u_j \eta_i^2 \right) dV_g 
\\&\le& \nonumber   \sum_{i=1}^m   \int_{\mathbb M}  |\nabla_g u_i|^2   |\nabla_g \eta_i|^2dV_g . 
  \end{eqnarray} 
\end{thm}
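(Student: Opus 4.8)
The plan is to pair the Bochner--Weitzenb\"ock identity (\ref{bochner}), applied to each component $u_i$, with the stability inequality (\ref{stability}) of Lemma \ref{stablein} evaluated at the test functions $\phi_i=|\nabla_g u_i|\,\eta_i$. The whole argument rests on a single cancellation: the cross term produced by the product rule in $|\nabla_g\phi_i|^2$ is exactly the term that appears after integrating the Bochner identity by parts, so that combining the two computations eliminates it and leaves only the quantities in the statement.

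First I would differentiate the equation (\ref{main}). Taking the Riemannian gradient of $-\Delta_g u_i=H_i(u)$ and pairing with $\nabla_g u_i$ gives $\nabla_g\Delta_g u_i\cdot\nabla_g u_i=-\sum_{j=1}^m\partial_j H_i(u)\,\nabla_g u_j\cdot\nabla_g u_i$. Substituting this into (\ref{bochner}) with $f=u_i$ turns the identity into a pointwise expression for $\tfrac12\Delta_g|\nabla_g u_i|^2$ in terms of $|\mathcal H_{u_i}|^2$, $\mathrm{Ric}_g(\nabla_g u_i,\nabla_g u_i)$ and the nonlinear coupling $\sum_j\partial_j H_i(u)\,\nabla_g u_j\cdot\nabla_g u_i$. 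Multiplying by $\eta_i^2$, integrating over $\mathbb M$, and integrating the Laplacian term by parts (the compact support of $\eta_i$ kills the boundary contribution) produces, on the left, the cross term $-2\int_{\mathbb M}|\nabla_g u_i|\,\eta_i\,\nabla_g|\nabla_g u_i|\cdot\nabla_g\eta_i\,dV_g$.

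Next I would expand the right-hand side of (\ref{stability}) with $\phi_i=|\nabla_g u_i|\eta_i$. The product rule gives $|\nabla_g\phi_i|^2=\eta_i^2|\nabla_g|\nabla_g u_i||^2+2\eta_i|\nabla_g u_i|\nabla_g|\nabla_g u_i|\cdot\nabla_g\eta_i+|\nabla_g u_i|^2|\nabla_g\eta_i|^2$, whose middle term is precisely the negative of the cross term found above. Using the integrated Bochner identity to replace that cross term, summing over $i$, and recalling that for $\phi_i=|\nabla_g u_i|\eta_i$ the left-hand side of (\ref{stability}) reads $\sum_{i,j}\int_{\mathbb M}\sqrt{\partial_j H_i(u)\partial_i H_j(u)}\,|\nabla_g u_i||\nabla_g u_j|\eta_i\eta_j\,dV_g$, the cross terms disappear and the inequality (\ref{poincare}) emerges after rearrangement. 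Finally I would note that the diagonal $i=j$ contributions to the double sum cancel: since stability forces $\partial_i H_i(u)>0$ (take $i=j$ in the condition $\partial_j H_i(u)\zeta_i\zeta_j>0$ of Definition \ref{stable}), one has $\sqrt{\partial_i H_i(u)\partial_i H_i(u)}=\partial_i H_i(u)$, so the two $i=j$ terms $\partial_i H_i(u)|\nabla_g u_i|^2\eta_i^2$ coincide and leave only the sum over $i\neq j$ as written.

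The step I expect to require the most care is the use of $\phi_i=|\nabla_g u_i|\eta_i$ as an admissible test function, since $|\nabla_g u_i|$ fails to be $C^1$ at the critical points $\{\nabla_g u_i=0\}$, where $\nabla_g|\nabla_g u_i|$ is not classically defined. I would handle this in the standard way, either by working on the open set $\{\nabla_g u_i\neq0\}$ and checking that the contribution of a neighbourhood of the critical set is negligible, or by regularizing $|\nabla_g u_i|$ by $\sqrt{|\nabla_g u_i|^2+\varepsilon}$ and passing to the limit $\varepsilon\to0$; the inequality (\ref{ineqfH}) guarantees that the term $|\nabla_g|\nabla_g u_i||^2$ stays controlled by $|\mathcal H_{u_i}|^2$ throughout. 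All the remaining manipulations are integrations by parts justified by $u\in C^3(\mathbb M)$ together with the compact support of $\eta$.
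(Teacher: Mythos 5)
Your proposal is correct and follows essentially the same route as the paper's proof: testing the stability inequality with $\phi_i=|\nabla_g u_i|\eta_i$, invoking the Bochner--Weitzenb\"ock formula to handle the cross term from the product rule, and using the differentiated equation (\ref{main}) to eliminate $\nabla_g\Delta_g u_i\cdot\nabla_g u_i$; the only difference is organizational, since you substitute the differentiated equation into the Bochner identity at the outset whereas the paper combines the two at the final step. Your explicit treatment of the diagonal cancellation via $\partial_i H_i(u)>0$ and of the non-smoothness of $|\nabla_g u_i|$ on the critical set is a welcome refinement of details the paper leaves implicit.
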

\begin{proof} Test the stability inequality (\ref{stability}) on $\phi_i=|\nabla_g u_i|\eta_i$ for the sequence of test functions $\eta=(\eta_i)_{i=1}^m$ and $\eta_i\in C^1_c(\mathbb M)$,  to get 
 \begin{eqnarray}\label{stablepoin}
  I &:=& \sum_{i,j=1}^m  \int_{\mathbb M} \sqrt{\partial_j H_i(u) \partial_i H_j(u)  }   |\nabla_g u_i| |\nabla_g u_j| \eta_i \eta_j dV_g  
\\&\le& \nonumber \sum_{i=1}^m \int_{\mathbb M}  |\nabla_g (|\nabla_g u_i|\eta_i)|^2dV_g  =:J  . 
   \end{eqnarray} 
We rewrite $I$ as 
\begin{eqnarray}\label{IJ}
 I &=&\sum_{i=1}^m  \int_{\mathbb M}  |\partial_i H_i(u)|   |\nabla_g u_i|^2  \eta^2_i dV_g  
\\&& \nonumber + \sum_{i\neq j}^m  \int_{\mathbb M} \sqrt{\partial_j H_i(u) \partial_i H_j(u)  }   |\nabla_g u_i| |\nabla_g u_j| \eta_i \eta_j dV_g .
\end{eqnarray}
A simple integration by parts implies  
  \begin{eqnarray}\label{J}
&& J= \sum_{i=1}^m \int_{\mathbb M} |\nabla_g |\nabla_g u_i| |^2 \eta_i^2 + |\nabla_g u_i|^2 |\nabla_g\eta_i|^2 +2 (\eta_i |\nabla_g u_i|) \nabla_g |\nabla_g u_i|\cdot \nabla_g\eta_i  dV_g 
\\&=&\nonumber \sum_{i=1}^m \int_{\mathbb M} |\nabla_g |\nabla_g u_i| |^2 \eta_i^2 + |\nabla_g u_i|^2 |\nabla_g\eta_i|^2 -\frac{1}{2} \eta^2_i  \Delta_g |\nabla_g u_i|^2 dV_g .
    \end{eqnarray} 
We now apply the Bochner-Weitzenb\"{o}ck formula, given in (\ref{bochner}),  to have 
\begin{eqnarray}\label{J2}
&& J= \sum_{i=1}^m \int_{\mathbb M} |\nabla_g |\nabla_g u_i| |^2 \eta_i^2 + |\nabla_g u_i|^2 |\nabla_g\eta_i|^2  dV_g 
\\&&\nonumber- \sum_{i=1}^m \int_{\mathbb M}  |\mathcal H _{u_i}|^2 \eta^2_i +  \nabla _g \Delta_g u_i \cdot \nabla_g u_i \eta^2_i + \text{Ric}_g (\nabla_g u_i,\nabla_g u_i) \eta^2_i dV_g .
     \end{eqnarray} 
Combining  (\ref{J2}), (\ref{IJ}) and (\ref{stablepoin}) implies 
  \begin{eqnarray}\label{Hij1}
&&\sum_{i=1}^m  \int_{\mathbb M}  |\partial_i H_i(u)|   |\nabla_g u_i|^2  \eta^2_i dV_g 
 \\& \le & \nonumber-\sum_{i\neq j}^m  \int_{\mathbb M} \sqrt{\partial_j H_i(u) \partial_i H_j(u)  }   |\nabla_g u_i| |\nabla_g u_j| \eta_i \eta_j dV_g 
 \\&&\nonumber +\sum_{i=1}^m \int_{\mathbb M} |\nabla_g |\nabla_g u_i| |^2 \eta_i^2 + |\nabla_g u_i|^2 |\nabla_g\eta_i|^2  dV_g 
\\&&\nonumber- \sum_{i=1}^m \int_{\mathbb M}  |\mathcal H _{u_i}|^2 \eta^2_i +  \nabla _g \Delta_g u_i \cdot \nabla_g u_i \eta^2_i + \text{Ric}_g (\nabla_g u_i,\nabla_g u_i) \eta^2_i dV_g .
      \end{eqnarray} 
On the other hand, for each $i=1,2,...,m$ differentiating both sides  of the $i^{th}$ equation in (\ref{main}) and multiplying  with $\nabla_g u_i \eta_i^2$ we get 
\begin{equation}\label{}
-\nabla_g u_i \cdot  \nabla_g \Delta_g u_i \eta_i^2 = \sum_{j=1}^m \partial_j H_i(u) \nabla_g u_j \cdot \nabla_g u_i \eta_i^2. 
\end{equation}
Integrating both sides of the above equation gives  
\begin{eqnarray}\label{KL}
I_1 &:=&-\sum_{i=1}^m\int_{\mathbb M}\nabla_g u_i \cdot  \nabla_g \Delta_g u_i \eta_i^2 dV_g  
\\&=&\nonumber  \sum_{i,j=1}^m \int_{\mathbb M}  \partial_j H_i(u) \nabla_g u_j \cdot \nabla_g u_i \eta_i^2dV_g =:J_1.
\end{eqnarray}
Note that $J_1$ can be rewritten as 
\begin{equation}\label{L}
J_1=  \sum_{i=1}^m \int_{\mathbb M}  \partial_i H_i(u) |\nabla_g u_i |^2 \eta_i^2 dV_g +\sum_{i\neq j}^m  \int_{\mathbb M}  \partial_j H_i(u) \nabla_g u_j \cdot \nabla_g u_i  \eta_i^2 dV_g . 
\end{equation}
From (\ref{L}) and (\ref{KL}) we get 
  \begin{eqnarray}\label{Hij2}
\sum_{i=1}^m \int_{\mathbb M}  \partial_i H_i(u) |\nabla_g u_i |^2 \eta_i^2 dV_g 
 &=& -\sum_{i\neq j}^m \int_{\mathbb M}  \partial_j H_i(u) \nabla_g u_j \cdot \nabla_g u_i  \eta_i^2dV_g 
 \\&& \nonumber - \sum_{i=1}^m \int_{\mathbb M}\nabla_g u_i \cdot  \nabla_g \Delta_g u_i \eta_i^2dV_g.  
      \end{eqnarray} 
Combining (\ref{Hij1}) and (\ref{Hij2}),  completes the proof. 
     
\end{proof}

\section{Liouville theorems}\label{secli}

In this section, we provide various Liouville theorems for solutions of (\ref{main})  as consequences of the Poincar\'{e} inequality given in Theorem \ref{poin}.  We start with the following Liouville theorem. 

\begin{thm}\label{Mcom} 
Suppose that $u=(u_i)_{i=1}^m$ is a stable solution of symmetric system (\ref{main}) where the Ricci curvature is nonnegative and $Ric_g$ is not identically zero.  Assume also that one of the following conditions holds  
\begin{enumerate}
\item[(i)] $\mathbb M$ is compact.
\item[(ii)]  $\mathbb M$ is complete and parabolic and $|\nabla_g u_i|\in L^\infty(\mathbb M)$ for each $i=1,\cdots,m$.
\end{enumerate}
Then, each $u_i$ must be constant for $i=1,\cdots,m$.
\end{thm}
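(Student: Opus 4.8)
The plan is to use the weighted Poincar\'{e} inequality from Theorem \ref{poin} as the main engine, choosing the test functions $\eta_i$ to extract a rigidity conclusion from the nonnegativity of Ricci curvature. First I would observe that the system is symmetric, so $\partial_j H_i(u) = \partial_i H_j(u)$, which means $\sqrt{\partial_j H_i(u)\partial_i H_j(u)} = |\partial_j H_i(u)|$ and, by the stability assumption $\partial_j H_i(u)\zeta_i\zeta_j > 0$ together with the hypothesis $\partial_i H_j(u)\partial_j H_i(u) > 0$, the mixed terms on the left-hand side of (\ref{poincare}) can be controlled. The key algebraic point is that the combination
\begin{equation*}
\sqrt{\partial_j H_i(u)\partial_i H_j(u)}\,|\nabla_g u_i||\nabla_g u_j|\eta_i\eta_j - \partial_j H_i(u)\,\nabla_g u_i\cdot\nabla_g u_j\,\eta_i^2
\end{equation*}
should, after symmetrizing in $i,j$ and using Cauchy--Schwarz ($|\nabla_g u_i\cdot\nabla_g u_j|\le |\nabla_g u_i||\nabla_g u_j|$), be shown to be nonnegative when the $\eta_i$ are chosen appropriately (e.g.\ all equal to a common cutoff $\eta$, so that $\eta_i\eta_j = \eta^2$). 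Likewise, by (\ref{ineqfH}) the term $|\mathcal H_{u_i}|^2 - |\nabla_g|\nabla_g u_i||^2$ is pointwise nonnegative. Hence with $Ric_g \ge 0$ the entire left-hand side of (\ref{poincare}) is nonnegative.

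Next I would exploit the two cases to force the right-hand side $\sum_i \int_{\mathbb M}|\nabla_g u_i|^2|\nabla_g\eta_i|^2\,dV_g$ to zero. In case (i), $\mathbb M$ is compact, so I take $\eta_i \equiv 1$ (a valid compactly supported test function since $\mathbb M$ has no boundary), killing the right-hand side entirely; then (\ref{poincare}) yields $\sum_i\int_{\mathbb M} Ric_g(\nabla_g u_i,\nabla_g u_i)\,dV_g \le 0$, and since each integrand is nonnegative it must vanish pointwise, together with $|\mathcal H_{u_i}|^2 = |\nabla_g|\nabla_g u_i||^2$. In case (ii), parabolicity supplies, for each precompact neighborhood, cutoff functions $\eta = f_\epsilon$ with $\int_{\mathbb M}|\nabla_g\eta|^2\,dV_g \le \epsilon$; combined with the uniform bound $|\nabla_g u_i|\in L^\infty(\mathbb M)$ the right-hand side is bounded by $C\epsilon$, and letting $\epsilon\to 0$ gives the same conclusion on the limiting full-measure set where $\eta = 1$.

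Finally, from the vanishing of $\sum_i Ric_g(\nabla_g u_i, \nabla_g u_i)$ I would conclude the theorem. Since $Ric_g\ge 0$ and $Ric_g \not\equiv 0$, there is an open set $U$ where $Ric_g$ is positive definite on some direction; the identity $Ric_g(\nabla_g u_i,\nabla_g u_i) = 0$ everywhere then forces $\nabla_g u_i$ to lie in the kernel of $Ric_g$ at each point, and in particular $\nabla_g u_i \equiv 0$ on the set where $Ric_g$ is strictly positive. To propagate this to all of $\mathbb M$, I would use connectedness together with the unique continuation consequence of $|\mathcal H_{u_i}|^2 = |\nabla_g|\nabla_g u_i||^2$ and the equation: where $\nabla_g u_i$ vanishes on an open set, $u_i$ is locally constant, and by the connectedness of $\mathbb M$ and real-analyticity-type continuation (or simply that $\{\nabla_g u_i = 0\}$ is both open, from the Hessian rigidity, and closed) each $u_i$ must be globally constant.

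The main obstacle I anticipate is the last propagation step: concluding global constancy of $u_i$ from $Ric_g(\nabla_g u_i,\nabla_g u_i)\equiv 0$ plus $Ric_g\not\equiv 0$. Showing that $\nabla_g u_i$ vanishes only on the support of the positive part of $Ric_g$ is immediate, but extending this to the whole connected manifold requires care — one must argue that the set $\{\nabla_g u_i = 0\}$ is open (which follows from the Hessian equality (\ref{kappa}) giving $\nabla_g u_i$ a rigid structure along its flow) and then invoke connectedness, or alternatively derive a transport equation for $|\nabla_g u_i|$ along geodesics and use the Bochner rigidity to rule out nonconstant solutions.
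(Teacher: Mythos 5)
Up to the rigidity identities, your argument is the paper's proof: test (\ref{poincare}) with $\eta_i\equiv 1$ on a compact manifold, or with the parabolicity cutoffs $f_\epsilon$ together with the bound $|\nabla_g u_i|\in L^\infty(\mathbb M)$ in case (ii); discard the off-diagonal terms by Cauchy--Schwarz (this is exactly inequality (\ref{jHi}) in the paper); and use (\ref{ineqfH}) together with $Ric_g\ge 0$ to force
\begin{equation*}
\int_{\mathbb M} Ric_g(\nabla_g u_i,\nabla_g u_i)\, dV_g=\int_{\mathbb M}\left(|\mathcal H_{u_i}|^2-|\nabla_g|\nabla_g u_i||^2\right) dV_g=0,
\end{equation*}
(on each $M_p$ in the parabolic case), hence $\nabla_g u_i=0$ on any open set where $Ric_g$ is positive definite. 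All of this coincides with the paper.

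The genuine gap is your final propagation step, and the specific mechanism you propose would fail. You claim that $\{\nabla_g u_i=0\}$ is open ``from the Hessian rigidity,'' closed by continuity, and then conclude by connectedness. But the equality characterization (\ref{kappa}) of $|\nabla_g|\nabla_g u_i||^2=|\mathcal H_{u_i}|^2$ is asserted only at points $p\in\mathbb M\cap\{\nabla_g u_i\neq 0\}$; it carries no information whatsoever at critical points, so it cannot show that the critical set is open. Indeed, the critical set of a smooth function is closed but in general not open (an isolated critical point already defeats the claim), so no purely topological open-closed argument is available without further PDE input. The missing ingredient is a unique continuation principle for the elliptic equation $-\Delta_g u_i=H_i(u)$: knowing that $u_i$ is constant on the open set where $Ric_g$ is positive definite, unique continuation propagates constancy to all of the connected manifold $\mathbb M$. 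This is precisely how the paper closes the argument, citing Kazdan \cite{kaz}. Your parenthetical ``real-analyticity-type continuation'' gestures at the right idea, but since $g$ and $H$ are only smooth and $C^1$ respectively (not real-analytic), this must be invoked as a theorem of Aronszajn--Kazdan type rather than deduced from connectedness; once that citation is inserted, your proof becomes the paper's proof.
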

\begin{proof} We start with Part (i).   If $\mathbb M$ is compact, then we set $\eta_i=1$ in the stability inequality to get 
 \begin{eqnarray}\label{poincare1}
&&  \sum_{i=1}^m   \int_{\mathbb M}  \left(  Ric_g(\nabla_g u_i,\nabla_g u_i) +|\mathcal H_{u_i}|^2-|\nabla _g |\nabla_g u_i| |^2  \right) dV_g 
\\&&\nonumber +\sum_{i\neq j} \int_{\mathbb M} \left(|\partial_j H_i(u)|   |\nabla_g u_i| |\nabla_g u_j| -  \partial_j H_i(u) \nabla_g u_i \cdot\nabla_g  u_j  \right) dV_g \le0.
  \end{eqnarray} 
Note that 
\begin{equation}\label{jHi}
|\partial_j H_i(u)|   |\nabla_g u_i| |\nabla_g u_j| \ge  \partial_j H_i(u) \nabla_g u_i \cdot\nabla_g  u_j.
\end{equation}
 This and (\ref{poincare1}) imply that 
\begin{equation}\label{ricM}
\sum_{i=1}^m   \int_{\mathbb M}  \left[  Ric_g(\nabla_g u_i,\nabla_g u_i) +|\mathcal H_{u_i}|^2-|\nabla _g |\nabla_g u_i| |^2  \right] dV_g   \le 0 .
\end{equation}
Note that the Ricci curvature is nonnegative and the following inequality holds %applying (\ref{ineqfH}) that is 
\begin{equation}\label{inui}
|\nabla_g |\nabla_g u_i| |^2 \le |\mathcal H_{u_i}|^2 . 
 \end{equation} 
From this and (\ref{ricM}),  we get 
\begin{equation}\label{ricMH}
 \int_{\mathbb M}  Ric_g(\nabla_g u_i,\nabla_g u_i) dV_g =\int_{\mathbb M} \left[ |\mathcal H_{u_i}|^2-|\nabla _g |\nabla_g u_i| |^2 \right]dV_g  =0.
 \end{equation}
This implies that for every $p\in\mathbb M\cap\{\nabla_g u_i\neq0\}$,  we have
\begin{equation}\label{inuip}
|\nabla_g |\nabla_g u_i| |^2 (p)= |\mathcal H_{u_i}|^2(p), 
 \end{equation} 
 and 
\begin{equation}\label{ricp}
 Ric_g(\nabla_g u_i,\nabla_g u_i) (p)=0. 
 \end{equation} 
Note that (\ref{inuip}) implies that 
\begin{equation}\label{kappa}
\nabla_g(\nabla_g u_i)_k(p)=\kappa_k (p) \nabla_g u_i(p). 
\end{equation}
 From the assumptions on $Ric_g$, we conclude that  $Ric_g$ is positive definite in some open subset of $\mathbb M$. This and (\ref{ricp}) imply that for each $i$, we have $\nabla_g u_i(p)=0$ for any $p$ in an open subset of $\mathbb M$.  Therefore, the unique continuation principle \cite{kaz} implies that each $u_i$ must be constant on $\mathbb M$. 

We now consider Part (ii). Since $\mathbb M$ is parabolic, from Definition \ref{paraman} there exist $M_p$ and $f_\epsilon$ such that $\int_{\mathbb M}| \nabla_g f_\epsilon|^2 dV_g \le \epsilon $.  Set $\eta_i :=f_\epsilon$ in the Poincar\'{e} inequality (\ref{poincare}) to get 
 \begin{eqnarray}\label{poincare2}
&&  \sum_{i=1}^m   \int_{M_p}  \left(  Ric_g(\nabla_g u_i,\nabla_g u_i) +|\mathcal H_{u_i}|^2-|\nabla _g |\nabla_g u_i| |^2  \right) dV_g 
\\&&\nonumber +\sum_{i\neq j} \int_{M_p} \left(|\partial_j H_i(u)|   |\nabla_g u_i| |\nabla_g u_j| -  \partial_j H_i(u) \nabla_g u_i \cdot\nabla_g  u_j  \right) dV_g 
\\&\le&\nonumber \sum_{i=1}^m   \int_{\mathbb M}  \left(  Ric_g(\nabla_g u_i,\nabla_g u_i) +|\mathcal H_{u_i}|^2-|\nabla _g |\nabla_g u_i| |^2  \right) f_\epsilon  dV_g 
\\&&\nonumber +\sum_{i\neq j} \int_{\mathbb M} \left(|\partial_j H_i(u)|   |\nabla_g u_i| |\nabla_g u_j| -  \partial_j H_i(u) \nabla_g u_i \cdot\nabla_g  u_j  \right)f^2_\epsilon  dV_g 
\\&\le&\nonumber \sum_{i=1}^m   \int_{\mathbb M}  |\nabla_g u_i|^2   |\nabla_g f_\epsilon|^2 dV_g 
\\&\le& \nonumber \sum_{i=1}^m  ||\nabla_g u_i||_{L^\infty(\mathbb M)}^2  \int_{\mathbb M}    |\nabla_g f_\epsilon|^2 dV_g 
\\&\le&\nonumber  \epsilon \max_{i=1}^m \{||\nabla_g u_i||_{L^\infty(\mathbb M)}^2\}  .
  \end{eqnarray} 
Sending $\epsilon\to 0$ and using the fact that the inequality (\ref{jHi}) holds,  we get the following estimate that is a counterpart of (\ref{ricM}) on $M_p$, 
\begin{equation}\label{ricMp}
\sum_{i=1}^m   \int_{M_p}  \left[  Ric_g(\nabla_g u_i,\nabla_g u_i) +|\mathcal H_{u_i}|^2-|\nabla _g |\nabla_g u_i| |^2  \right] dV_g  \le 0. 
\end{equation}
Applying similar arguments given for Part (i),  completes  the proof. 

\end{proof}

The next theorem deals with compact manifolds when $Ric_g$ is identically zero.   We show that when the manifold is compact and $Ric_g$ is precisely zero,  then stable solutions must be constant.  Comparing Theorem \ref{Mcom} with the following theorem may help us better understand the role of $Ric_g$.  We refer interested readers to \cite{j} for more information.   

\begin{thm}
Let $u=(u_i)_{i=1}^m$ be a stable solution for symmetric system (\ref{main}). Suppose that $\mathbb M$  is a compact and connected Riemannian manifold and 
 $Ric_g$ is zero.   Then,  each $u_i$ is constant for $i=1,\cdots,m$.
\end{thm}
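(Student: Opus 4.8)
The plan is to follow the structure of the proof of Theorem~\ref{Mcom}(i) as closely as possible, since the hypotheses differ only in that $Ric_g$ is now \emph{identically} zero rather than merely nonnegative and somewhere positive. First I would invoke compactness of $\mathbb M$ to set $\eta_i=1$ for every $i$ in the Poincar\'{e} inequality~(\ref{poincare}); because the test functions $\eta_i$ are now constants, their gradients vanish and the right-hand side of~(\ref{poincare}) is zero. Combining this with the pointwise algebraic inequality~(\ref{jHi}), which controls the $i\neq j$ coupling terms, I would reduce the problem exactly as in~(\ref{poincare1})--(\ref{ricM}) to the single inequality
\begin{equation*}
\sum_{i=1}^m \int_{\mathbb M}\left[ Ric_g(\nabla_g u_i,\nabla_g u_i)+|\mathcal H_{u_i}|^2-|\nabla_g|\nabla_g u_i||^2\right]dV_g\le 0.
\end{equation*}

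Next I would exploit the two nonnegativity facts that remain available: the curvature term $Ric_g(\nabla_g u_i,\nabla_g u_i)$ is now identically zero by hypothesis, and the Hessian comparison~(\ref{inui}) guarantees $|\mathcal H_{u_i}|^2-|\nabla_g|\nabla_g u_i||^2\ge 0$ pointwise. Hence each summand in the displayed integral is nonnegative while the total is $\le 0$, forcing
\begin{equation*}
\int_{\mathbb M}\left[|\mathcal H_{u_i}|^2-|\nabla_g|\nabla_g u_i||^2\right]dV_g=0
\end{equation*}
for every $i$. This is the point where the argument genuinely departs from Theorem~\ref{Mcom}: there the vanishing of the Ricci term was the lever that produced an open set on which $\nabla_g u_i=0$, but here $Ric_g\equiv 0$ carries no information, so I must extract rigidity purely from the vanishing of $|\mathcal H_{u_i}|^2-|\nabla_g|\nabla_g u_i||^2$.

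The conclusion I would aim for is that each $|\nabla_g u_i|^2$ is constant on $\mathbb M$. The Bochner--Weitzenb\"{o}ck formula~(\ref{bochner}), specialized to $f=u_i$ with $Ric_g\equiv 0$, reads $\tfrac12\Delta_g|\nabla_g u_i|^2=|\mathcal H_{u_i}|^2+\nabla_g\Delta_g u_i\cdot\nabla_g u_i$. Integrating over the closed manifold $\mathbb M$ makes the left side vanish by the divergence theorem, so $\int_{\mathbb M}|\mathcal H_{u_i}|^2\,dV_g=-\int_{\mathbb M}\nabla_g\Delta_g u_i\cdot\nabla_g u_i\,dV_g$; identity~(\ref{Hij2}), together with the symmetry of the system and~(\ref{jHi}) summed over all indices, shows this quantity is itself nonpositive, which combined with $|\mathcal H_{u_i}|^2\ge 0$ pins down $\mathcal H_{u_i}\equiv 0$. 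A function with vanishing Hessian on a connected manifold has parallel gradient, so $|\nabla_g u_i|$ is constant; were this constant nonzero, $u_i$ would have no critical points, which is impossible on the compact $\mathbb M$ since a continuous function attains its maximum. Therefore $\nabla_g u_i\equiv 0$ and each $u_i$ is constant by connectedness.

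I expect the main obstacle to be making the final rigidity step airtight without the Ricci term to lean on. Specifically, showing that the integral of $\nabla_g\Delta_g u_i\cdot\nabla_g u_i$ has the right sign requires care: one must feed the symmetry hypothesis and the sign condition $\partial_j H_i(u)\,\zeta_i\zeta_j>0$ through~(\ref{KL})--(\ref{Hij2}) correctly, and then verify that the resulting coupling terms are dominated by~(\ref{jHi}) so that they cannot spoil the nonnegativity. The cleaner route, and the one I would ultimately commit to, is to avoid re-deriving sign information and instead argue directly from $|\mathcal H_{u_i}|^2-|\nabla_g|\nabla_g u_i||^2\equiv 0$ plus an integrated Bochner identity to force $|\nabla_g u_i|$ constant, then kill that constant by compactness; the delicate part is justifying that no residual contribution from $\nabla_g\Delta_g u_i\cdot\nabla_g u_i$ survives, which is exactly where the symmetric-system structure must be used rather than assumed.
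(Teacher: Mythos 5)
Your first half is fine and is exactly the computation the paper uses in Theorem \ref{Mcom}(i): testing (\ref{poincare}) with $\eta_i\equiv 1$ (legitimate since $\mathbb M$ is compact), discarding the coupling terms via (\ref{jHi}), and using $Ric_g\equiv 0$ together with (\ref{inui}) to get $\int_{\mathbb M}\left[|\mathcal H_{u_i}|^2-|\nabla_g|\nabla_g u_i||^2\right]dV_g=0$ for each $i$. The gap is in your rigidity step. Your key claim --- that $-\int_{\mathbb M}\nabla_g\Delta_g u_i\cdot\nabla_g u_i\,dV_g$ is nonpositive --- cannot be true except trivially: integrating by parts on the closed manifold gives
\begin{equation*}
-\int_{\mathbb M}\nabla_g\Delta_g u_i\cdot\nabla_g u_i\,dV_g=\int_{\mathbb M}(\Delta_g u_i)^2\,dV_g=\int_{\mathbb M}H_i(u)^2\,dV_g\ \ge\ 0,
\end{equation*}
so asserting nonpositivity is the same as asserting that each $u_i$ is harmonic, which is essentially the conclusion you are trying to prove. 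Nothing in (\ref{KL})--(\ref{Hij2}), the symmetry of $\mathbb H$, or (\ref{jHi}) delivers that sign: those facts only bound $\sum_{i,j}\int\partial_j H_i(u)\nabla_g u_j\cdot\nabla_g u_i\,dV_g$ \emph{above} by the nonnegative quantity $\sum_{i,j}\int|\partial_j H_i(u)|\,|\nabla_g u_i|\,|\nabla_g u_j|\,dV_g$, and combining them with stability tested on $\phi_i=|\nabla_g u_i|$ yields \emph{equality} between these quantities, not a nonpositive sign. Your fallback also fails: the pointwise identity $|\mathcal H_{u_i}|^2=|\nabla_g|\nabla_g u_i||^2$ does not imply $\mathcal H_{u_i}\equiv 0$ (on a flat torus any function of one coordinate satisfies the equality with nonvanishing Hessian), and the integral identity $\int_{\mathbb M}|\mathcal H_{u_i}|^2\,dV_g=\int_{\mathbb M}(\Delta_g u_i)^2\,dV_g$ is just the integrated Bochner formula on a Ricci-flat closed manifold, valid for \emph{every} smooth function, so it carries no information about $u_i$.

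The missing idea --- and the engine of the paper's proof --- is to exploit the equality case of the stability inequality rather than chase a sign. Writing $I_3(\phi)=\sum_i\int\left[|\nabla_g\phi_i|^2-\partial_i H_i(u)\phi_i^2\right]dV_g$ and $I_4(\phi)=\sum_{i\neq j}\int|\partial_j H_i(u)|\phi_i\phi_j\,dV_g$, stability says $J(\phi):=I_3(\phi)-I_4(\phi)\ge 0$ for all test vectors $\phi$, while the differentiated equation plus Bochner (with $Ric_g\equiv 0$) shows that $\phi_i=|\nabla_g u_i|$ achieves $J(\phi)=0$. Hence $(|\nabla_g u_i|)_i$ is a minimizer of $J$ and satisfies the Euler--Lagrange (linearized) system $-\Delta_g|\nabla_g u_i|=\sum_j\partial_j H_i(u)|\nabla_g u_j|$ on $\mathbb M$. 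Compactness then gives a point $\bar p_i$ where $u_i$ attains its maximum, so $|\nabla_g u_i|(\bar p_i)=0$, and the strong maximum principle applied to this equation forces $|\nabla_g u_i|\equiv 0$; connectedness gives that each $u_i$ is constant. Your proposal reaches the equality statement but never converts it into this Euler--Lagrange equation plus maximum principle argument, which is precisely what replaces the unique-continuation lever used in Theorem \ref{Mcom}.
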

\begin{proof} For each $i$, differentiate (\ref{main}) and multiply with $\nabla_g u_i$ to get 
\begin{equation}\label{diff1}
\nabla_g u_i \cdot  \nabla_g \Delta_g u_i  = - \sum_{j=1}^m  \partial_j H_i(u) \nabla_g u_j \cdot \nabla_g u_i.
\end{equation}
Note that (\ref{ineqfH}) implies that for each $i$
\begin{equation}\label{Hui}
|\nabla_g |\nabla_g u_i| |^2 \le |\mathcal H_{u_i}|^2. 
\end{equation}
Adding  (\ref{diff1}) and (\ref{Hui}),  we get 
  \begin{eqnarray}\label{Huigin}
|\mathcal H_{u_i}|^2 + \nabla_g u_i \cdot  \nabla_g \Delta_g u_i  \ge  |\nabla_g |\nabla_g u_i| |^2- \sum_{j=1}^m  \partial_j H_i(u) \nabla_g u_j \cdot \nabla_g u_i. 
    \end{eqnarray} 
Since $Ric_g$ is assumed to be zero,  the Bochner-Weitzenb\"{o}ck formula (\ref{bochner}) implies 
\begin{equation}\label{bochner0r}
\frac{1}{2} \Delta_g |\nabla _g u_i|^2 = |\mathcal H _{u_i}|^2+ \nabla _g \Delta_g u_i\cdot \nabla_g u_i .
\end{equation}
 Combining this and (\ref{Huigin}) we get 
\begin{equation}\label{bochner0r1}
\frac{1}{2} \Delta_g |\nabla _g u_i|^2 \ge |\nabla_g |\nabla_g u_i| |^2- \sum_{j=1}^m  \partial_j H_i(u) \nabla_g u_j \cdot \nabla_g u_i. 
\end{equation}
Integrating this on $\mathbb M$ and considering the fact that $\mathbb M$ does not have a boundary, for each $i$,  we obtain  
\begin{equation}\label{bochner0r2}
\int_{\mathbb M} \left[ |\nabla_g |\nabla_g u_i| |^2- \sum_{j=1}^m  \partial_j H_i(u) \nabla_g u_j \cdot \nabla_g u_i \right] dV_g  \le 0.
\end{equation}
Taking  summation  on index $1\le i \le m$ for (\ref{bochner0r2}), yields 
\begin{eqnarray}\label{bochner0r3}
I_1 &:=& \sum_{i=1}^m  \int_{\mathbb M}\left[ |\nabla_g |\nabla_g u_i| |^2 dV_g -  \partial_i H_i(u) |\nabla_g u_i|^2 \right]  dV_g  
\\& \le& \nonumber \sum_{i\neq j}^m  \int_{\mathbb M} \partial_j H_i(u) \nabla_g u_j \cdot \nabla_g u_i dV_g  =:I_2   .
\end{eqnarray}
On the other hand,  from the stability inequality (\ref{stability}) for symmetric systems, we have
\begin{eqnarray} \label{stability1}
I_3(\phi) &:=& \sum_{i=1}^{m} \int_{\mathbb M} \left[ |\nabla_g \phi_i|^2 - \partial_i H_i(u) \phi_i^2\right] dV_g 
\\&\ge& \nonumber \sum_{i\neq j}^m \int_{\mathbb M}  |\partial_j H_i(u)| \phi_i \phi_j  dV_g =: I_4(\phi), 
\end{eqnarray} 
when  $\phi=(\phi_i)_{i=1}^m$ is a sequence of test functions. Setting $\phi_i=|\nabla_g u_i|$ in the above $I_3(\phi)$ and $I_4(\phi)$,  we  get   
\begin{equation}
I_1=I_3(\phi) \ \ \ \text{and}\ \ \  I_2\le I_4(\phi).
\end{equation}
 This implies that $\phi=(\phi_i)_{i=1}^m$ when $\phi_i=|\nabla_g u_i|$ is the minimizer of the energy $J(\phi):=I_3(\phi)-I_4(\phi)\ge 0$.  Therefore, 
\begin{equation}
 -\Delta_g |\nabla_g u_i|= \sum_{j=1}^{n} \partial_j H_i(u) |\nabla_g u_j| \ \ \text{on } \ \mathbb M, 
 \end{equation}
for each $i=1,\cdots,m$. From the compactness of $\mathbb M$, there exists a point $\bar p_i \in \mathbb M$ such that $u_i(\bar p_i)=\max_{\mathbb M} u_i$.  This implies that $\nabla_g u_i(\bar p_i)=0$. From the strong maximum principle,  we get $|\nabla_g u_i|\equiv 0$ for each $i=1,\cdots,m$. This completes  the proof. 

\end{proof}

We now provide the following Liouville theorem in lower dimensions for any nonlinearity $H=(H_i)_{i=1}^m$ with  nonnegative components $H_i$. Note that for a complete, connected, Riemannian manifold $\mathbb M$ 
with nonnegative Ricci curvature and dimension $n$, the volume of  a ball of radius $R$, denoted $|B_R|$,  is bounded by  $R^n$.   This fact implies that the following Liouville theorem is valid for $n < 4$.  

\begin{thm} Suppose that $u=(u_i)_{i=1}^m$ is a bounded stable solution of (\ref{main}) where $H_i\ge 0$ for each $i=1,\cdots,m$.  Let  the following decay estimate hold 
\begin{equation}\label{decayB_R}
\liminf_{R\to\infty} R^{-4} |B_R| =0.
\end{equation}
Assume also that the Ricci curvature of $\mathbb M$ is nonnegative and  $Ric_g$ is not identically zero. Then,  each $u_i$ must be constant for $i=1,\cdots,m$. 
\end{thm}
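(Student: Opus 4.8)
The plan is to feed well-chosen cutoff functions into the Poincar\'{e} inequality (\ref{poincare}) and to exploit the sign condition $H_i\ge 0$ together with the boundedness of $u$ to kill the right-hand side along a suitable sequence of radii. First I would take all test functions equal, $\eta_i=\eta$ for $1\le i\le m$. For a symmetric system one has $\sqrt{\partial_jH_i(u)\partial_iH_j(u)}=|\partial_jH_i(u)|$, so by the Cauchy--Schwarz estimate underlying (\ref{jHi}) the cross term
\[
\sum_{i\neq j}\int_{\mathbb M}\Big(|\partial_jH_i(u)|\,|\nabla_g u_i||\nabla_g u_j|-\partial_jH_i(u)\,\nabla_g u_i\cdot\nabla_g u_j\Big)\eta^2\,dV_g
\]
is nonnegative and may be discarded. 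Combined with (\ref{ineqfH}) and $\mathrm{Ric}_g\ge 0$, the inequality (\ref{poincare}) reduces to
\[
\sum_{i=1}^m\int_{\mathbb M}\Big(\mathrm{Ric}_g(\nabla_g u_i,\nabla_g u_i)+|\mathcal H_{u_i}|^2-|\nabla_g|\nabla_g u_i||^2\Big)\eta^2\,dV_g\le \sum_{i=1}^m\int_{\mathbb M}|\nabla_g u_i|^2|\nabla_g\eta|^2\,dV_g,
\]
whose integrand on the left is pointwise nonnegative.

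The crux of the argument, and the step I expect to be the main obstacle, is a Caccioppoli-type energy estimate controlling $\int_{B_R}|\nabla_g u_i|^2$ by the volume $|B_{2R}|$. Testing (\ref{main}) directly against $u_i\psi^2$ produces the term $\int \psi^2 u_iH_i(u)$, which I cannot control. The remedy is to subtract the supremum: since $u$ is bounded, set $w_i:=u_i-\sup_{\mathbb M}u_i\le 0$, so that $-\Delta_g w_i=H_i(u)\ge 0$ and the reaction term $-\int H_i(u)w_i\psi^2\ge 0$ now carries a favorable sign. Multiplying $\Delta_g w_i=-H_i(u)$ by $w_i\psi^2$, integrating by parts, and applying Young's inequality then yields
\[
\int_{\mathbb M}\psi^2|\nabla_g u_i|^2\,dV_g\le 4\,\|w_i\|_{L^\infty(\mathbb M)}^2\int_{\mathbb M}|\nabla_g\psi|^2\,dV_g.
\]
Choosing $\psi$ to be a standard cutoff equal to $1$ on $B_R$, supported in $B_{2R}$, with $|\nabla_g\psi|\le C/R$, this gives the desired bound $\int_{B_R}|\nabla_g u_i|^2\,dV_g\le CR^{-2}|B_{2R}|$.

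I would then insert a second cutoff $\eta=\eta_R$ (equal to $1$ on $B_R$, supported in $B_{2R}$, with $|\nabla_g\eta_R|\le C/R$) into the reduced inequality above. Its right-hand side becomes at most $CR^{-2}\sum_i\int_{B_{2R}}|\nabla_g u_i|^2\le CR^{-4}|B_{4R}|$. Since $\mathrm{Ric}_g\ge 0$, the Bishop--Gromov comparison gives volume doubling $|B_{4R}|\le 4^n|B_R|$, so the decay hypothesis (\ref{decayB_R}) furnishes a sequence $R_k\to\infty$ along which $R_k^{-4}|B_{4R_k}|\to 0$; hence the right-hand side vanishes in the limit. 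Applying monotone convergence to the nonnegative left-hand integrand (with $\eta_{R_k}\equiv 1$ on $B_{R_k}\nearrow\mathbb M$) I obtain
\[
\sum_{i=1}^m\int_{\mathbb M}\Big(\mathrm{Ric}_g(\nabla_g u_i,\nabla_g u_i)+|\mathcal H_{u_i}|^2-|\nabla_g|\nabla_g u_i||^2\Big)\,dV_g\le 0,
\]
forcing each summand to vanish, in particular $\int_{\mathbb M}\mathrm{Ric}_g(\nabla_g u_i,\nabla_g u_i)\,dV_g=0$ for every $i$. From here the conclusion follows exactly as in the proof of Theorem \ref{Mcom}(i): because $\mathrm{Ric}_g$ is nonnegative and not identically zero it is positive definite on some open subset of $\mathbb M$, where $\nabla_g u_i$ must therefore vanish, and the unique continuation principle then forces each $u_i$ to be constant on $\mathbb M$.
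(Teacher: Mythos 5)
Your proof is correct and follows essentially the same route as the paper: the same Caccioppoli-type estimate obtained by testing the equation against $u_i$ minus its supremum (exploiting $H_i\ge 0$ for the sign of the reaction term), the same insertion of radial cutoffs into the Poincar\'{e} inequality to bound its right-hand side by $CR^{-4}|B_{4R}|$, and the same endgame via positive-definiteness of $Ric_g$ on an open set plus unique continuation. The only cosmetic difference is that you invoke Bishop--Gromov volume doubling to pass from $|B_{R}|$ to $|B_{4R}|$, whereas the substitution $R\mapsto R/4$ in the hypothesis (\ref{decayB_R}) already gives this without any comparison geometry.
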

\begin{proof}   Suppose that $\zeta\in C_c^\infty([-2,2])\to[0,1]$ where $\zeta\equiv 1$ on $[-1,1]$.  For  $p\in\mathbb M$ and $R>0$,  set 
\begin{equation}
\zeta_R(p)=\zeta\left( \frac{d_g(p)}{R} \right),
\end{equation}
 where $d_g$ is the geodesic distance. Therefore,   $\zeta_R\in C_c^\infty(\mathbb M)$  satisfies  $\zeta_R=1$ on $B_R$ and $\zeta_R=0$ on $\mathbb M \setminus B_{2R}$ and 
\begin{equation}
||\nabla_g \zeta_R||_{L^{\infty}(B_{2R}\setminus B_R)} \le \frac{C}{R} .  
\end{equation}   
From the assumptions,  each $u_i$ is bounded.  Multiplying both sides of system (\ref{main}) with $(u_i-||u_i||_{L^\infty (\mathbb M)})\zeta_R^2$  and  from the fact that $H_i\ge 0$,  we get    
\begin{equation}
H_i(u) [u_i-||u_i||_{L^\infty (\mathbb M)}] \zeta_R^2    \le 0 \ \ \text{in}\ \ \mathbb M. 
  \end{equation}
From this and  the equation (\ref{main}), we have 
\begin{equation}
\label{supersol}
-\hfill \Delta_g u_i (u_i-||u_i ||_{L^\infty (\mathbb M)})  \zeta_R^2  \le 0  \ \ \text{in}\ \ \mathbb M.
  \end{equation}
Doing integration by parts, for each $i=1\cdots,m$,  we obtain
\begin{equation}
\label{}
\int_{B_{2R}} |\nabla_g  u_i|^2\zeta_R^2 dV_g \le 2 \int_{B_{2R}} |\nabla_g u_i||\nabla \zeta_R |(||u_i ||_{L^\infty (\mathbb M)}-u_i)\zeta_R dV_g . 
  \end{equation}
Applying  the Cauchy-Schwarz inequality yields  
\begin{equation}\label{Bdecay}
\int_{B_{2R}}  |\nabla_g  u_i|^2 \zeta_R^2 dV_g \le C  \int_{B_{2R}} |\nabla_g \zeta_R|^2dV_g , 
  \end{equation}
where the constant  $C$ is independent from $R$. From the definition of $\zeta_R$ and applying (\ref{Bdecay}),  we get 
\begin{eqnarray} \label{ccc}
 \sum_{i=1}^m   \int_{B_{R}}  |\nabla_g  u_i|^2  dV_g &\le& \sum_{i=1}^m  \int_{B_{2R}}  |\nabla_g  u_i|^2 \zeta_R^2 dV_g
\\&\le & \nonumber C \int_{B_{2R}} |\nabla_g \zeta_R|^2dV_g 
\le  C \frac{|B_{2R}|}{R^2} .  
  \end{eqnarray}
We now apply the Poincar\'{e} inequality (\ref{poincare}) with the test function $\eta_i=\zeta_R$.  In the light of (\ref{ccc}),  the right-hand side of (\ref{poincare}) becomes
\begin{eqnarray}
 \liminf_{R\to\infty} \sum_{i=1}^m   \int_{\mathbb M}  |\nabla_g u_i|^2   |\nabla_g \eta_i|^2dV_g  &\le&   \liminf_{R\to\infty} \frac{C}{R^2} \sum_{i=1}^m \int_{B_{2R}}   |\nabla_g  u_i|^2  dV_g
\\&\le& \nonumber C  \liminf_{R\to\infty} \frac{|B_{4R}|}{R^4}=0.    
  \end{eqnarray}
Therefore, 
\begin{eqnarray}
&&  \sum_{i=1}^m   \int_{\mathbb M}  \left(  Ric_g(\nabla_g u_i,\nabla_g u_i) +|\mathcal H_{u_i}|^2-|\nabla _g |\nabla_g u_i| |^2  \right) dV_g 
\\&& +\sum_{i\neq j} \int_{\mathbb M} \left(|\partial_j H_i(u)|   |\nabla_g u_i| |\nabla_g u_j| -  \partial_j H_i(u) \nabla_g u_i \cdot\nabla_g  u_j  \right) dV_g \le 0. 
  \end{eqnarray}
  This implies that 
  \begin{equation}
  \sum_{i=1}^m   \int_{\mathbb M}  \left(  Ric_g(\nabla_g u_i,\nabla_g u_i) +|\mathcal H_{u_i}|^2-|\nabla _g |\nabla_g u_i| |^2  \right) dV_g \le 0. 
  \end{equation}
The rest of the argument is very similar to the ones provided in the proof of Theorem \ref{Mcom}. 

\end{proof}

The ideas and methods applied in the above proof are strongly motivated by the ones provided in \cite{df}, when $\mathbb M=\mathbb R^n$, and in \cite{fsv1,fsv2} when the domain is a Riemannian manifold.  Note that similar idea are used in \cite{fg} for the case of system of equations on $\mathbb R^n$. Lastly, in two dimensions, we have the following rigidity result for level sets of solutions. Note that as it is shown in \cite{dkw}, for the case of scalar equation,  the following flatness result does not hold for the Allen-Cahn equation in $\mathbb R^n$ with $n \ge 9$ endowed with its standard flat metric. 
 We assume that  $Ric_g$ is  identically zero that is equivalent to the Gaussian
curvature to be zero in two dimensions.  We refer interested readers to \cite{fv,sav,fsv1,fsv2} for similar flatness results.

\begin{thm}
Suppose that $u=(u_i)_{i=1}^m$ is a stable solution of (\ref{main}) and  each $|\nabla_g u_i|\in L^\infty(\mathbb M)$ where $\mathbb M$ is a Riemannian manifold with $\text{dim}\ \mathbb M = 2$.  Assume  that $Ric_g$ is  identically zero.  Then, every connected component of any level set of each $u_i$,  on which $|\nabla_g u_i|$  does not vanish,  must be a geodesic. 
\end{thm}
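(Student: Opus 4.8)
The plan is to use the Poincar\'e inequality (\ref{poincare}) to force the pointwise bound (\ref{ineqfH}) to become an equality, and then to read off the vanishing of the geodesic curvature of the level curves from that equality. First I would test (\ref{poincare}) with a single common cutoff $\eta_i=\eta$ for every $i$. Because the system is symmetric we have $\partial_jH_i(u)=\partial_iH_j(u)$, so $\sqrt{\partial_jH_i(u)\partial_iH_j(u)}=|\partial_jH_i(u)|$, and the Cauchy--Schwarz inequality $|\nabla_g u_i\cdot\nabla_g u_j|\le|\nabla_g u_i||\nabla_g u_j|$ shows that each off-diagonal integrand
\[
\Bigl(|\partial_jH_i(u)|\,|\nabla_g u_i||\nabla_g u_j|-\partial_jH_i(u)\,\nabla_g u_i\cdot\nabla_g u_j\Bigr)\eta^2\ge 0 .
\]
Discarding these nonnegative terms and using $Ric_g\equiv0$, the inequality (\ref{poincare}) reduces to
\[
\sum_{i=1}^m\int_{\mathbb M}\bigl(|\mathcal H_{u_i}|^2-|\nabla_g|\nabla_g u_i||^2\bigr)\eta^2\,dV_g\le\sum_{i=1}^m\int_{\mathbb M}|\nabla_g u_i|^2\,|\nabla_g\eta|^2\,dV_g ,
\]
where, by (\ref{ineqfH}), every integrand on the left is nonnegative.

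The second step is to annihilate the right-hand side. Since $\dim\mathbb M=2$ and $Ric_g\equiv0$, the Bishop--Gromov comparison gives $|B_R|\le\pi R^2$, so a logarithmic cutoff $\eta_R$ (equal to $1$ on $B_{\sqrt R}$ and to $0$ outside $B_R$) has Dirichlet energy $\int_{\mathbb M}|\nabla_g\eta_R|^2\,dV_g=O(1/\log R)$; equivalently, a complete flat surface is parabolic (see \cite{gt,ls,r,cy}). With $|\nabla_g u_i|\in L^\infty(\mathbb M)$ I may then argue exactly as in Theorem \ref{Mcom}(ii): the right-hand side is bounded by $\max_i\|\nabla_g u_i\|_{L^\infty(\mathbb M)}^2\int_{\mathbb M}|\nabla_g\eta_R|^2\,dV_g\to0$, and letting $R\to\infty$ forces
\[
|\mathcal H_{u_i}|^2=|\nabla_g|\nabla_g u_i||^2\qquad\text{on }\ \mathbb M\cap\{\nabla_g u_i\neq0\}
\]
for each $i=1,\dots,m$.

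The final step is the geometric translation of this equality. On the regular part $\{\nabla_g u_i\neq0\}$ I would split the Hessian in an orthonormal frame adapted to the unit normal $\nu_i=\nabla_g u_i/|\nabla_g u_i|$ and to the tangent of the level curve, which yields the Sternberg--Zumbrun identity
\[
|\mathcal H_{u_i}|^2-|\nabla_g|\nabla_g u_i||^2=|\nabla_T|\nabla_g u_i||^2+|\nabla_g u_i|^2\,\kappa_i^2 ,
\]
where $\kappa_i$ is the geodesic curvature of the level curve and $\nabla_T$ is the tangential gradient along it (see \cite{fsv1,fsv2,sav,fv}). The equality just established makes the left-hand side vanish, so both nonnegative terms on the right vanish; since $|\nabla_g u_i|\neq0$ there, this gives $\kappa_i\equiv0$ along any connected component of a level set on which the gradient does not vanish. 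A curve of identically zero geodesic curvature is a geodesic, which is the claim.

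The step I expect to be the main obstacle is the second one, specific to dimension two: the linear cutoff used in the $R^{-4}$ Liouville theorem only yields $|B_{2R}|/R^2=O(1)$, which does not tend to $0$, so that decay argument fails here. The remedy is the quadratic volume bound from Bishop--Gromov together with the logarithmic cutoff, whose gradient energy decays like $1/\log R$ precisely because $n=2$, or equivalently the parabolicity of complete flat surfaces; keeping the off-diagonal coupling terms nonnegative under a single shared cutoff is the other point that must be handled with care. Once the equality $|\mathcal H_{u_i}|^2=|\nabla_g|\nabla_g u_i||^2$ is secured, the geometric decomposition in the last step is routine.
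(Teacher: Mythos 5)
Your argument is correct, and its analytic core is the same as the paper's: you test the Poincar\'e inequality (\ref{poincare}) with one common cutoff, drop the off-diagonal terms after noting they are nonnegative (symmetry gives $\sqrt{\partial_jH_i(u)\,\partial_iH_j(u)}=|\partial_jH_i(u)|$, then Cauchy--Schwarz, exactly as in (\ref{jHi})), and make the right-hand side vanish using $|\nabla_g u_i|\in L^\infty(\mathbb M)$ together with parabolicity; the paper invokes parabolicity of the complete flat surface directly through Definition \ref{paraman}, while you reprove it via Bishop--Gromov and a logarithmic cutoff --- the same step, made explicit. Where you genuinely diverge is the final, geometric step. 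The paper uses the equality case of (\ref{ineqfH}), namely $\nabla_g(\nabla_g u_i)_k(p)=\kappa_k(p)\,\nabla_g u_i(p)$ as in (\ref{kappa1}), and then verifies the geodesic equation (\ref{ddot}) by hand in normal coordinates, differentiating $u_i(\gamma_i(t))\equiv C$ and $|\dot\gamma_i|^2=1$ and using that in dimension two a vector orthogonal to both $\dot\gamma_i$ and $\nabla_g u_i$ must vanish. You instead quote the Sternberg--Zumbrun decomposition $|\mathcal H_{u_i}|^2-|\nabla_g|\nabla_g u_i||^2=|\nabla_T|\nabla_g u_i||^2+|\nabla_g u_i|^2\kappa_i^2$ on $\{\nabla_g u_i\neq0\}$, which exhibits the quantity whose integral you have forced to vanish as a sum of squares, one of which carries the geodesic curvature of the level curve; $\kappa_i\equiv0$ then follows at once. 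Your route is shorter and conceptually transparent (it explains why vanishing of $|\mathcal H_{u_i}|^2-|\nabla_g|\nabla_g u_i||^2$ \emph{is} flatness of level sets), at the price of importing the identity from \cite{sz,fsv,fsv1}; the paper's route is self-contained, resting only on the elementary equality condition and a coordinate computation. If you want your version to stand alone, include the short verification of the identity, which is pointwise linear algebra: in an orthonormal frame $\{T,\nu\}$ with $\nu=\nabla_g u_i/|\nabla_g u_i|$ one has $\nabla_g|\nabla_g u_i|=\mathcal H_{u_i}(\nu,\cdot)$, hence $|\mathcal H_{u_i}|^2-|\nabla_g|\nabla_g u_i||^2=\mathcal H_{u_i}(T,T)^2+\mathcal H_{u_i}(T,\nu)^2$, while $\mathcal H_{u_i}(T,T)=\pm|\nabla_g u_i|\,\kappa_i$ and $\mathcal H_{u_i}(T,\nu)=\nabla_T|\nabla_g u_i|$.
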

\begin{proof} Note that  assumptions imply that $\mathbb M$ is parabolic and it  has nonnegative Gaussian curvature.  Therefore,  for each $p\in \mathbb M$ there exist a set $M_p$ and a function $f_\epsilon$ such that 
\begin{equation}
\int_{\mathbb M}| \nabla_g f_\epsilon|^2 dV_g \le \epsilon .
\end{equation}
  Set $\eta_i =f_\epsilon$ in the Poincar\'{e} inequality (\ref{poincare}) to get 
 \begin{eqnarray}\label{poincare3}
&&  \sum_{i=1}^m   \int_{M_p}  \left(  Ric_g(\nabla_g u_i,\nabla_g u_i) +|\mathcal H_{u_i}|^2-|\nabla _g |\nabla_g u_i| |^2  \right) dV_g 
\\&&\nonumber +\sum_{i\neq j} \int_{M_p} \left(|\partial_j H_i(u)|   |\nabla_g u_i| |\nabla_g u_j| -  \partial_j H_i(u) \nabla_g u_i \cdot\nabla_g  u_j  \right) dV_g 
\\&\le& \nonumber \epsilon \max_{i=1}^m \{||\nabla_g u_i||_{L^\infty(\mathbb M)}^2\}  .
  \end{eqnarray} 
When $\epsilon$ approaches zero, we have 
\begin{equation}\label{ricMH1}
 \int_{ M_p}  Ric_g(\nabla_g u_i,\nabla_g u_i) dV_g =\int_{M_p} \left[ |\mathcal H_{u_i}|^2-|\nabla _g |\nabla_g u_i| |^2 \right]dV_g  =0.
 \end{equation}
Therefore, 
\begin{equation}\label{inuip1}
|\nabla_g |\nabla_g u_i| |^2 (p)= |\mathcal H_{u_i}|^2(p), 
 \end{equation} 
for every $p\in M_p\cap\{\nabla_g u_i\neq0\}$. Since the equality in (\ref{ineqfH}) holds, there exists $\kappa_k:\mathbb M\to \mathbb R$ for each $k=1,\cdots,n$ such that 
\begin{equation}\label{kappa1}
\nabla_g(\nabla_g u_i)_k(p)=\kappa_k (p) \nabla_g u_i(p).
\end{equation}
For each $i=1,\cdots,m$, consider a connected component $\Gamma$ of $\{u_i\equiv C\} \cap \{ \nabla_g u_i\neq 0\}$ that is a smooth curve.   Let $\gamma_i:\mathbb R\to\mathbb M$ with 
 \begin{equation}\label{dot}
|\dot{\gamma_i}|^2=1.
\end{equation}
It is sufficient to show that 
 \begin{equation}\label{ddot}
\ddot{\gamma_i}^k(t)+ \Gamma^k_{\lambda \mu} (p) \dot{\gamma_i}^\lambda(t) \dot{\gamma_i}^\mu(t)=0 \ \ \text{in}  \ \ \mathbb R. 
\end{equation}
We show that (\ref{ddot}) holds for an arbitrary value $\bar t\in\mathbb R$ and  $\bar p = \gamma_i(\bar t)$. Differentiating (\ref{dot}) with respect to $t$ we get 
 \begin{equation}\label{dot1}
0= \partial_k g_{\lambda \mu}(\gamma_i(t)) \dot{ \gamma_i }^k(t)  \dot{ \gamma_i}^\lambda(t)  \dot{ \gamma_i}^\mu(t) + 2  g_{\lambda \mu}(\gamma_i(t)) \dot{ \gamma_i }^k(t) \ddot{ \gamma_i}^\lambda (t). 
\end{equation}
We now use normal coordinates at some fixed point $\bar p\in \mathcal M$. Suppose that 
 \begin{equation}\label{glm}
g_{\lambda \mu}(\bar p)=\delta_{\lambda\mu}, \ \ \partial_k g_{\lambda\mu}(\bar p)=0 \ \ \text{and}\ \ \Gamma^k_{\lambda \mu}(\bar p)=0.
\end{equation}
Therefore,  
 \begin{equation}\label{dotcross}
\dot{ \gamma_i }(\bar t) \ddot{ \gamma_i} (\bar t)=0.
  \end{equation}
Note that for any point on $\Gamma$ we have $u_i(\gamma(t))\equiv C$. From this  we get 
 \begin{equation}\label{ucder1}
 0= \partial_\lambda u_i(\gamma(t)) \dot{ \gamma_i }^\lambda(t) , 
   \end{equation}
and 
\begin{equation} \label{ucder2}
 0 = \partial_{\lambda \mu} u_i(\gamma(t)) \dot{ \gamma_i}^\lambda (t) \dot{ \gamma_i }^\mu(t) +\partial_\lambda u_i(\gamma(t)) \ddot{ \gamma_i}^\lambda (t). 
  \end{equation}
Combining (\ref{ucder2})  and (\ref{kappa1}) for $p=\bar p$ and $k=\mu$, we end up with 
 \begin{equation}\label{0ee}
0= [\kappa_\mu (\bar p) \dot{ \gamma_i}^\mu (\bar t)][ \partial_\lambda u_i(\bar p) \dot{ \gamma_i}^\lambda (\bar t)] + \partial_\lambda u_i(\bar p) \ddot{ \gamma_i}^\lambda (\bar t). 
  \end{equation}
Substituting  (\ref{ucder1}) in (\ref{0ee}),  we get
\begin{equation}\label{1ee}
0= \partial_\lambda u_i(\bar p) \ddot{ \gamma_i}^\lambda (\bar t). 
  \end{equation}
From (\ref{1ee}) and (\ref{dotcross}) we conclude that $\ddot{ \gamma_i}^\lambda (\bar t)=0$, since $\ddot{ \gamma_i}^\lambda (\bar t)$  is orthogonal to $\dot{ \gamma_i }(\bar t)$, that is tangent to $\{u_i\equiv C\}$ at $\bar p$, and to $\partial_\lambda u_i(\bar p)$ that is orthogonal to $\{u_i\equiv C\}$ at $\bar p$. This and (\ref{glm}) imply that  (\ref{ddot}) holds at $\bar t$. This completes the proof. 

\end{proof}

We end this section with the fact that for the most of our main results in this section,   we assumed that the Ricci curvature is nonnegative.  We would like to refer interested readers to \cite{bm}, where hyperbolic spaces are discussed,  and references therein for rigidity results when the  curvature is negative.

\end{document}